\documentclass[reqno]{amsart}
\usepackage[english]{babel}
\usepackage{amscd,amssymb,amsmath,amsfonts,latexsym,amsthm}
\usepackage{inputenc}
\usepackage{graphicx,color}
\tolerance=5000 \topmargin -1cm \oddsidemargin=0,5cm
\evensidemargin=-0,2cm \textwidth 15.6cm \textheight 24cm
\linespread{1.0}
\vfuzz2pt 
\hfuzz2pt 
\newtheorem{thm}{Theorem}[section]
\newtheorem{cor}[thm]{Corollary}
\newtheorem{lem}[thm]{Lemma}
\newtheorem{prop}[thm]{Proposition}
\newtheorem{defn}[thm]{Definition}

\numberwithin{equation}{section}
\DeclareMathOperator{\Der}{Der}
\DeclareMathOperator{\Hom}{Hom}

\begin{document}

\title[Pro-solvable Lie algebras]{On pro-solvable Lie algebras with maximal pro-nilpotent ideals $\mathfrak{m}_0$ and $\mathfrak{m}_2$.}

\author{K.K. Abdurasulov$^1$, B.A. Omirov$^{1,2}$,  G.O. Solijanova$^2$}

\address{$^1$ Institute of Mathematics Uzbekistan Academy of Sciences, Tashkent 100170, Uzbekistan.} \email{abdurasulov0505@mail.ru, omirovb@mail.ru}

\address{$^2$ National University of Uzbekistan, Tashkent 100174, Uzbekistan} \email{gulhayo.solijonova@mail.ru}

\begin{abstract} The paper is devoted to the study of pro-solvable Lie algebras whose maximal pro-nilpotent ideal is either $\mathfrak{m}_0$ or $\mathfrak{m}_2$. Namely, we describe such Lie algebras and establish their completeness. Triviality of the second cohomology group for one of the obtained algebra is established.
\end{abstract}

\subjclass[2010]{17B40, 17B56, 17B65}

\keywords{Lie algebra, potentially nilpotent Lie algebra, pro-nilpotent Lie algebra, cohomology group.}

\maketitle
\normalsize

\section{Introduction}

Pro-solvable and pro-nilpotent Lie algebras are an important and interesting class of Lie algebras,
which generalize the class of solvable and nilpotent Lie algebras, respectively.

The definition of pro-solvable (respectively, pro-nilpotent) Lie algebra $L$ is divided into two parts; the first part of the definition is the condition of potentially solvability (respectively, nilpotency) and the second part of the definition is $\dim L^{[i]}/L^{[i+1]}<\infty$  (respectively, $\dim L^i/L^{i+1}<\infty)$ for any $i\geq 1$.

Since the study of infinite-dimensional solvable and pro-nilpotent Lie algebras is a complex problem, they should be studied by adding additional restrictions. One of such important restrictions for the study of solvable Lie algebras is fixing its nilradical, while for nilpotent Lie algebras one of successful restrictions is condition on
dimensions of $L^i/L^{i+1}.$ Here we apply these approaches for the study of pro-solvable Lie algebras by fixing their maximal pro-nilpotent ideals.

Due to works \cite{Yu} and \cite{Millio} we have some examples of pro-nilpotent Lie algebras. Among pro-nilpotent Lie algebras we consider those which have the most simple structure, they are $\mathfrak{m}_0, \ \mathfrak{m}_1$ and $\mathfrak{m}_2$. It is known that unique pro-solvable Lie algebra with maximal pro-solvable Lie algebra $\mathfrak{m}_1=\{e_i\ | \ i\in \mathbb{N}\}$ is algebra $W_{\geq 0}=\{e_i \ | \ i\in \mathbb{N}\cup \{0\}\}.$

Note that for the algebras $\mathfrak{m}_0, \ \mathfrak{m}_1$ and $\mathfrak{m}_2$ the conditions
$\dim(L^1/L^{2})=2, \ \dim(L^i/L^{i+1})=1\ i\geq1$ hold true.

Similar to finite-dimensional solvable Lie algebras considered in \cite{Qobil} we focus our study for pro-solvable Lie algebras with maximal pro-nilpotent ideals and maximal dimension of complementary subspace to the ideals.
In this work we describe pro-solvable 
Lie algebras generated by $\mathfrak{m}_0$ (respectively, $\mathfrak{m}_2$) and its special kinds of derivations under the condition that complementary subspace to $\mathfrak{m}_0$ has maximal dimension. We also prove that such algebras are complete. Furthermore, the triviality of the second cohomology group of one of them is proved.

Throughout the paper we consider complex Lie algebras with countable basis such that any element of the algebra can be represented as a finite linear combination of basis elements. Moreover, by maximal ideal we shall assume maximal by including ideal.

\normalsize
\section{Preliminaries}

In this section we give necessary definitions and preliminary results.

\begin{defn} An algebra $( L,[\cdot,\cdot])$  is called a
Lie algebra if it satisfies the properties
\begin{center}
\([x,x]=0\),\\
\([x,[y,z]]+[y,[z,x]] + [z,[x,y]]=0\)
\end{center}
for all \(x,y ,z\in L\). The second condition is called Jacobi identity.
\end{defn}

\begin{defn} A linear map $d \colon L \rightarrow L$ of an algebra $(L, [-,-])$ is said to be a
 \emph{derivation} if for all $x, y \in L$, the following  condition holds: \[d([x,y])=[d(x),y] + [x, d(y)] \,.\]
\end{defn}
For a given $x \in L$, $ad_x$ denotes the map $ad_x: L \rightarrow L$ such that $ad_x(y)=[x,y], \ \forall y \in L$.
One can check that a map $ad_x$ is a derivation. We call this kind of derivations  {\it inner derivations}.

\begin{defn} A Lie algebra $L$ is called  \emph{complete} if ${\rm Center}(L)=0$ and all derivations of $L$ are
inner \cite{Jacobson}.
\end{defn}
For a Lie algebra $L$ we define the {\it lower central} and the {\it derived series} as follow
$$L^1=L, \ L^{k+1}=[L^k,L],  \ k \geq 1, \qquad L^{[1]}=L, \ L^{[s+1]}=[L^{[s]},L^{[s]}], \ s \geq 1,$$
respectively.

\begin{defn} \cite{Millio} A Lie algebra $L$ is said to be pro-solvable (respectively, pro-nilpotent) if
$\bigcap_{i=1} ^{\infty}L^{[i]}=0$ and $L/L^{[i]}<\infty$ (respectively,
$\bigcap_{i=1} ^{\infty}L^{i}=0$ and $L/L^{i}<\infty$) for any $i\geq1$.
\end{defn}

\begin{defn} A linear map $\rho: L \to L$ is called potentially nilpotent, if $\cap_{i=1}^{\infty}(Im\ \rho^i)=0$ holds.
\end{defn}

Below we introduce the analogue of notion of nil-independency which plays a crucial role in the description of finite-dimensional solvable Lie algebras \cite{Mub}.

\begin{defn} Derivations $d_{1},d_{2},\dots,d_{n}$ of a Lie algebra $L$ over a field $\mathbb{F}$ are said to be potentially nil-independent, if a map $f=\alpha_{1}d_{1}+\alpha_{2}d_{2}+\ldots+\alpha_{n}d_{n}$ is not potentially nilpotent for any scalars $\alpha_{1},\alpha_{2},\dots,\alpha_{n}\in \mathbb{F}$. In other words,
$\cap_{i=1}^{\infty}Imf^i=0$ if and only if $\alpha_{1}=\alpha_{2}=\dots=\alpha_{n}=0.$
\end{defn}



For the definition of cohomology group of Lie algebras we refer readers to \cite{Jac}, \cite{Kac}. In this paper we just recall that
$${\rm H}^1(L,L)={\rm Der}(L)/{\rm Inder}(L) \quad \mbox{and} \quad {\rm H}^2(L,L)={\rm Z}^2(L,L)/{\rm B}^2(L,L)$$ where the set ${\rm Z}^2(L,L)$ consists of those elements $\varphi\in {\rm Hom}(\wedge^2L, L)$ such that
\begin{equation}\label{eq5}
[x,\varphi(y,z)] - [\varphi(x,y), z]
+[\varphi(x,z), y] +\varphi(x,[y,z]) - \varphi([x,y],z)+\varphi([x,z],y)=0,
\end{equation}
while ${\rm B}^2(L,L)$ consists of elements $\psi\in {\rm Hom}(\wedge^2L, L)$ such that
\begin{equation}\label{eq4}
\psi(x,y)=[d(x),y] + [x,d(y)] - d([x,y]) \,\, \mbox{for some linear map} \,\, d\in {\rm Hom}(L,L).
\end{equation}

In terms of cohomology groups the notion of completeness of a Lie algebra $L$ means that it is centerless and ${\rm H}^1(L,L)=0$.

For the convenience we introduce denotation
$$Z(a,b,c)=[a,\varphi(b,c)]-[\varphi(a,b),c]+[\varphi(a,c),b]+\varphi(a,[b,c])-\varphi([a,b],c)+\varphi([a,c],b).$$

\section{Main Part}

In this section we present the main results on pro-solvable Lie algebras whose maximal pro-nilpotent ideal is either $\mathfrak{m}_0$ or $\mathfrak{m}_2$ under the condition of maximality of complemented space to the ideals. Similar to the finite-dimensional case, first we give description of derivations of the ideals and then by using their properties we describe pro-solvable algebra. Finally, we are going to prove some properties of low order cohomologies of the obtained pro-solvable Lie algebras.

Let us consider the following infinite-dimensional Lie algebras
$$\begin{array}{ll}
\mathfrak{m}_0:\left\{\begin{array}{lllll}
[e_{i},e_{1}]=e_{i+1},& i\geq 2,
\end{array}\right.\quad \text{and}
&\quad \mathfrak{m}_2:\left\{\begin{array}{lllll}
[e_1, e_i] = e_{i+1},& i \geq 2,\\[1mm]
[e_2, e_j ] = e_{j+2}, & j \geq3.
\end{array}\right.
\end{array}$$

\begin{prop}\label{dif0} The spaces of derivations of the algebras $\mathfrak{m}_0$ and $\mathfrak{m}_2$ are the following

\normalsize
 $$\Der(\mathfrak{m}_0):\left\{\begin{array}{ll}d(e_{1})=\sum\limits_{i=1}^{t}\alpha_{i}e_{i},\\[2mm]
  d(e_{k})=((k-2)\alpha_{1}+\beta_{2})e_{k}+\sum\limits_{i=3}^{t}\beta_{i}e_{i+k-2},\,\,\ \text{where} \,\,k\geq 2,
  \end{array}\right.$$
  $$\Der(\mathfrak{m}_2):\left\{\begin{array}{ll}
  d(e_1)=\alpha_1e_1+\sum\limits_{i=3}^{n}\alpha_ie_i,\quad
d(e_2)=2\alpha_1e_2+\sum\limits_{i=3}^{n}\beta_ie_i,\\[2mm]
d(e_i)=i\alpha_1e_{i}+\beta_3e_{i+1}+\sum\limits_{j=i+2}^{n+i-2}
(\beta_{j-i+2}-\alpha_{j-i+1})e_j-\alpha_ne_{n+i-1},\, \,i\geq3.
\end{array}\right.$$
 \end{prop}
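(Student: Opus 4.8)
The plan is to compute the derivation spaces directly from the defining relations, exploiting the fact that both $\mathfrak{m}_0$ and $\mathfrak{m}_2$ are generated by low-index elements. For $\mathfrak{m}_0$, note that $e_1$ and $e_2$ generate the whole algebra, since $e_{i+1}=[e_i,e_1]$ gives $e_k=[\cdots[[e_2,e_1],e_1]\cdots,e_1]$ for all $k\geq 2$. Hence a derivation $d$ is completely determined by $d(e_1)$ and $d(e_2)$, and the remaining values $d(e_k)$ for $k\geq 3$ are forced by the Leibniz rule applied to $e_k=[e_{k-1},e_1]$. First I would write $d(e_1)=\sum_i\alpha_i e_i$ and $d(e_2)=\sum_i\beta_i e_i$ as arbitrary (finite) combinations, then impose the derivation identity on the brackets $[e_i,e_1]=e_{i+1}$ to extract recurrence relations on the coefficients.

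For $\mathfrak{m}_0$ the key computation is $d(e_{i+1})=d([e_i,e_1])=[d(e_i),e_1]+[e_i,d(e_1)]$. Since $[e_j,e_1]=e_{j+1}$ and all other brackets among basis elements vanish, $[e_i,d(e_1)]=\alpha_1[e_i,e_1]=\alpha_1 e_{i+1}$ (the higher $\alpha_j e_j$ terms bracket trivially against $e_i$ because $\mathfrak{m}_0$ only has brackets with $e_1$). This yields a clean first-order recurrence $d(e_{i+1})=[d(e_i),e_1]+\alpha_1 e_{i+1}$, which I would solve by induction starting from $d(e_2)$; the coefficient of the diagonal term $e_k$ in $d(e_k)$ accumulates as $(k-2)\alpha_1+\beta_2$, matching the stated formula, while the off-diagonal part $\sum_{i\geq 3}\beta_i e_{i+k-2}$ simply shifts. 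I would then verify conversely that every such $d$ really is a derivation (it suffices to check the single family of relations, since these generate all brackets). The truncation at index $t$ reflects that $d(e_1)$ is a finite combination, which propagates a uniform upper bound.

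For $\mathfrak{m}_2$ the same strategy applies but the bookkeeping is heavier because there are two families of brackets, $[e_1,e_i]=e_{i+1}$ and $[e_2,e_j]=e_{j+2}$, so the recurrences couple the $\alpha$'s and $\beta$'s. Here $e_1,e_2$ again generate the algebra (all $e_i$ with $i\geq 3$ arise from $[e_1,\cdot]$), so $d$ is determined by $d(e_1),d(e_2)$; writing these with coefficients $\alpha_i,\beta_i$, I would impose the derivation identity on both relation families simultaneously. The diagonal weights $i\alpha_1$ on $d(e_i)$ come from the eigenvalue structure of $\operatorname{ad}_{e_1}$ acting as a weight-$1$ raising operator, and the requirement of compatibility with $[e_2,e_j]=e_{j+2}$ forces the precise linear combinations $\beta_{j-i+2}-\alpha_{j-i+1}$ appearing in the off-diagonal coefficients, together with the boundary term $-\alpha_n e_{n+i-1}$ that enforces consistency at the top of the truncation.

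The main obstacle is the $\mathfrak{m}_2$ case: unwinding the two coupled recurrences and verifying that the closed-form expression $\beta_{j-i+2}-\alpha_{j-i+1}$ genuinely satisfies both relation families for all $i,j$ requires careful index manipulation, and one must confirm that the two constraints are consistent rather than overdetermining the coefficients. I expect the cleanest route is to first nail down $d(e_i)$ for $i\geq 3$ from the relation $[e_1,e_i]=e_{i+1}$ by induction, obtaining a candidate formula, and then check that this candidate is automatically compatible with $[e_2,e_j]=e_{j+2}$ — the latter compatibility is what pins down the exact coefficient pattern and the truncation index $n$, and is the step most prone to sign and shift errors.
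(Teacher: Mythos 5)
Your overall strategy is the same as the paper's: write $d(e_1)=\sum_i\alpha_ie_i$ and $d(e_2)=\sum_j\beta_je_j$ as arbitrary finite combinations, then propagate through the defining relations by the Leibniz rule to determine $d(e_k)$ for $k\geq 3$. The paper's own proof is exactly this (stated very tersely, with the $\mathfrak{m}_2$ case omitted as ``similar''), and your recurrence $d(e_{i+1})=[d(e_i),e_1]+\alpha_1e_{i+1}$ correctly produces the diagonal coefficient $(k-2)\alpha_1+\beta_2$ and the shifted tail.

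There is, however, one concrete gap: you assert that it suffices to impose the derivation identity on the single family of nonzero relations $[e_i,e_1]=e_{i+1}$, ``since these generate all brackets.'' This is not enough. A derivation must also respect the \emph{vanishing} brackets $[e_i,e_j]=0$ for $i,j\geq 2$, and these are not consequences of the generating relations as constraints on $d$. Concretely, your recurrence never sees the coefficient $\beta_1$ of $e_1$ in $d(e_2)$, because $[\beta_1e_1,e_1]=0$; so your plan would leave $\beta_1$ as a free parameter. But the map with $d(e_2)=e_1$ and $d(e_k)=0$ otherwise fails the Leibniz rule on $[e_2,e_3]=0$, since $[d(e_2),e_3]+[e_2,d(e_3)]=[e_1,e_3]=-e_4\neq 0$. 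It is precisely the identity $Z$ applied to pairs $e_i,e_j$ with $i,j\geq 2$ that forces $\beta_1=0$, which is why the stated formula for $d(e_2)$ has no $e_1$-component. The same caution applies to $\mathfrak{m}_2$, where besides the coupling of the two bracket families (which you do anticipate), the zero brackets among $e_i,e_j$ with $i,j\geq 3$ contribute constraints such as the absence of an $e_1$- and $e_2$-component in $d(e_2)$ and the forced coefficient $2\alpha_1$. Once you add the vanishing brackets to the list of relations you check, both the derivation of the constraints and the converse verification go through as you describe.
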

\begin{proof} We describe $\Der(\mathfrak{m}_0)$ and omit the description $\text{Der}(\mathfrak{m}_2)$ because it is similar. We set
$$d(e_{1})=\sum_{s=1}^{p_{1}}\alpha_{i_{s}}e_{i_{s}}, \quad d(e_{2})=\sum_{t=1}^{q_{2}}\beta_{j_{t}}e_{j_{t}}.$$
Without loss of generality one can assume  $$d(e_{1})=\sum_{i=1}^{t}\alpha_{i}e_{i},\quad d(e_{2})=\sum_{j=1}^{t}\beta_{j}e_{j}, \quad  max\{p,q\}=t.$$

Now by straightforward checking the derivation property and using the table
of multiplications of the algebras $\mathfrak{m}_0$, we obtain
\begin{equation}
 d(e_{1})=\sum_{i=1}^{t}\alpha_{i}e_{i}, \quad d(e_{k})=((k-2)\alpha_{1}+\beta_{2})e_{k}+\sum_{i=3}^{t}\beta_{i}e_{i+k-2}, \,k\geq 2.
  \end{equation}
\end{proof}

We denote by ${M}_0, M_2$ pro-solvable Lie algebras with maximal pro-nilpotent ideals are $\mathfrak{m}_0, \mathfrak{m}_2$, respectively and by $Q_1, Q_2$ complementary subspaces to $\mathfrak{m}_0, \mathfrak{m}_2$, respectively.

\begin{lem}\label{lem1} $ad_{x}$ and $ad_y$ are non-potentially nilpotent for any $x\in Q_1$ and $y\in Q_2$, respectively.
\end{lem}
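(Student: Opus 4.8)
\noindent\emph{Proof plan.} The plan is to argue by contradiction, converting potential nilpotency of an adjoint operator into a violation of the maximality of the pro-nilpotent ideal. I treat $M_0,\mathfrak{m}_0,Q_1$; the case of $M_2,\mathfrak{m}_2,Q_2$ is identical, using the second half of Proposition \ref{dif0}. Fix a nonzero $x\in Q_1$ (for $x=0$ the operator vanishes) and set $d:=ad_x|_{\mathfrak{m}_0}$. Since $\mathfrak{m}_0$ is an ideal, $d$ is a derivation of $\mathfrak{m}_0$, hence of the form given in Proposition \ref{dif0}, with ``diagonal weight'' acting on $e_k$ by $(k-2)\alpha_1+\beta_2$ for $k\ge 2$ and by $\alpha_1$ on $e_1$. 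Because $ad_x(\mathfrak{m}_0)\subseteq\mathfrak{m}_0$ we have $Im(d^i)\subseteq Im(ad_x^i)$, so it suffices to show $\bigcap_i Im(d^i)\neq 0$. The whole argument then reduces to controlling the pair $(\alpha_1,\beta_2)$.

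First I would rule out the weight-free case $\alpha_1=\beta_2=0$. Then $d$ sends every basis vector strictly upward in index, so $d$ is locally nilpotent on $\mathfrak{m}_0$. I would then verify that $I:=\mathfrak{m}_0\oplus\mathbb{C}x$ is an ideal of $M_0$ and that it is pro-nilpotent: the lower central series of $I$ is governed by iterating $d$ together with the lower central series of $\mathfrak{m}_0$, and the decreasing index filtration $\langle e_k : k\ge j\rangle$ shows $\bigcap_j I^{\,j}=0$ with finite-dimensional quotients. This yields a pro-nilpotent ideal strictly containing $\mathfrak{m}_0$, contradicting its maximality; hence $(\alpha_1,\beta_2)\neq(0,0)$.

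It then remains to show that a nonzero weight forces $\bigcap_i Im(d^i)\neq 0$, and here I would produce a $d$-invariant subspace on which $d$ acts invertibly. If $\alpha_1=0$ and $\beta_2\neq0$, every weight equals $\beta_2$, so on $W:=\langle e_k : k\ge 2\rangle$ one has $d|_W=\beta_2\,\mathrm{id}+N$ with $N$ strictly index-raising; since $N$ is locally nilpotent on finitely supported vectors, a terminating Neumann series inverts $d|_W$, giving $d^i(W)=W$ and hence $W\subseteq\bigcap_i Im(d^i)$. If instead $\alpha_1\neq0$, the weights $(k-2)\alpha_1+\beta_2$ are pairwise distinct; choosing $k_0$ with $\lambda:=(k_0-2)\alpha_1+\beta_2\neq0$ (possible, as the weight vanishes for at most one index) and solving $(d-\lambda\,\mathrm{id})v=0$ with $v=e_{k_0}+w$, $w\in\langle e_k:k>k_0\rangle$ (the relevant operator being invertible on that subspace by the same Neumann argument, its diagonal entries $\lambda_k-\lambda$ being nonzero) yields an eigenvector $v$ with $dv=\lambda v\neq0$, so $\mathbb{C}v\subseteq\bigcap_i Im(d^i)$. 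In either case $\bigcap_i Im(d^i)\neq 0$, so $ad_x$ is not potentially nilpotent.

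The step I expect to be the main obstacle is the weight-free case: one must check that $\mathfrak{m}_0\oplus\mathbb{C}x$ is genuinely an ideal, which requires controlling $[Q_1,Q_1]$ (i.e.\ that brackets of complementary elements land in $\mathfrak{m}_0$), and that it is genuinely pro-nilpotent in the infinite-dimensional sense rather than merely locally nilpotent, so that the definition $\bigcap_i(\cdots)^i=0$ with finite-dimensional quotients is verified against the index filtration. By contrast, the eigenvector/invariant-subspace construction in the nonzero-weight case is routine once one exploits the local nilpotency of the index-raising part. The same scheme applies verbatim to $\mathfrak{m}_2$, where the only weight parameter is $\alpha_1$ (the action on $e_i$ having weight $i\alpha_1$), so potential nilpotency again forces the weight to vanish and thus the enlargement contradiction.
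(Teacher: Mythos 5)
Your overall architecture coincides with the paper's: both arguments reduce to the dichotomy on the pair $(\alpha_1,\beta_2)$ read off from Proposition \ref{dif0}, and both derive the contradiction in the case $\alpha_1=\beta_2=0$ by exhibiting $\mathfrak{m}_0+\mathbb{C}x$ as a pro-nilpotent ideal larger than $\mathfrak{m}_0$ (the paper also leaves the ideal-ness of this subspace unaddressed, so your flagging of $[Q_1,Q_1]$ is a fair point against both texts). The difference is that you try to actually prove the step the paper merely asserts, namely that a nonzero weight forces $\bigcap_i Im(d^i)\neq 0$, and it is exactly there that your argument breaks.

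The Neumann-series inversion does not work in this setting. The paper's convention is that every element is a \emph{finite} linear combination of basis vectors, and the strictly index-raising part $N$ of $d|_W$ is not locally nilpotent there: already for $N=\beta_3 S$ with $S$ the shift $e_k\mapsto e_{k+1}$ one has $N^j e_k=\beta_3^j e_{k+j}\neq 0$ for all $j$, so the series $\sum_j(-\beta_2^{-1}N)^j v$ never terminates and the would-be preimage (or eigenvector $v=e_{k_0}+w$) has infinite support, hence does not lie in the algebra. Worse, the conclusion itself is false at this level of generality: identifying $W=\langle e_k: k\geq 2\rangle$ with $\mathbb{C}[T]$ so that $d|_W$ becomes multiplication by $p(T)=\beta_2+\sum_{i\ge3}\beta_i T^{i-2}$, one gets $Im\bigl((d|_W)^i\bigr)=p^i\,\mathbb{C}[T]$ and therefore $\bigcap_i Im\bigl((d|_W)^i\bigr)=0$ whenever some $\beta_i\neq0$ with $i\geq 3$, \emph{even though} $\beta_2\neq 0$. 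So ``$d$ potentially nilpotent $\Rightarrow\alpha_1=\beta_2=0$'' is not a fact about arbitrary derivations of $\mathfrak{m}_0$; any correct proof must bring in additional structure (for instance the finiteness conditions in the pro-solvability of $M_0$, which rule out such $x$). This is the same soft spot in the paper's own proof, but since you commit to an explicit mechanism, the gap in your version is concrete: the invertibility/eigenvector step fails on finitely supported vectors.
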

\begin{proof} Let us assume the contrary and let $x\in Q_1$ such that $\cap_{k=1}^{\infty}Im\empty\ ad_{x}^{k}=0$. Set $\mathfrak{m}_0'=\mathfrak{m}_0+\mathbb{F}x$. Since $ad_{{x}|\mathfrak{m}_0}=d$ for some $d\in Der(\mathfrak{m}_0)$, from Proposition \ref{dif0} the condition $\cap_{k=1}^{\infty}Im\empty\ ad_{x}^{k}=0$ implies that $\alpha_{1}=\beta_{2}=0$. Therefore, we have
 $$[e_{1},x]=\sum_{i=2}^{t}\alpha_{i}e_{i}, \quad [e_{k},x]=\sum_{i=3}^{t}\beta_{i}e_{i+k-2}, \ \ k\geq 2.$$
One can easily check that $\cap_{i=1}^{\infty}(\mathfrak{m}_0')^{i}=0$, hence $\mathfrak{m}_0'$ is pro-nilpotent, which is a contradiction to maximality of $\mathfrak{m}_0.$ The rest of the assertion of the lemma is proved similarly.
\end{proof}

\begin{prop}\label{thm1} The dimensions of $Q_1$ and $Q_2$ are not greater than the maximal number of potentially nil-independent derivations of $\mathfrak{m}_0$ and $\mathfrak{m}_2$, respectively.
\end{prop}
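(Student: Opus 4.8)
The plan is to realize $Q_1$ (respectively $Q_2$) inside $\Der(\mathfrak{m}_0)$ (respectively $\Der(\mathfrak{m}_2)$) through the adjoint action, and to show that a basis of the complementary subspace is sent to a potentially nil-independent family of derivations; the asserted bound is then immediate from the definition of the maximal number of such derivations. I would carry out the argument for $Q_1$ in full, the case of $Q_2$ being entirely analogous with $\Der(\mathfrak{m}_2)$ in place of $\Der(\mathfrak{m}_0)$.

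First I would introduce the linear map $\phi\colon Q_1 \to \Der(\mathfrak{m}_0)$ defined by $\phi(x)=\mathrm{ad}_x|_{\mathfrak{m}_0}$. Since $\mathfrak{m}_0$ is an ideal of $M_0$, for $a,b\in\mathfrak{m}_0$ both $[x,a]$ and $[x,b]$ lie in $\mathfrak{m}_0$, and the Jacobi identity gives $\mathrm{ad}_x([a,b])=[\mathrm{ad}_x a,\,b]+[a,\,\mathrm{ad}_x b]$, so $\phi(x)$ is genuinely a derivation of $\mathfrak{m}_0$; linearity of $\phi$ follows at once from $\mathrm{ad}_{x+\lambda y}=\mathrm{ad}_x+\lambda\,\mathrm{ad}_y$.

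The core of the proof is to show that $\phi$ carries a basis of $Q_1$ to a potentially nil-independent family. Fix a basis $x_1,\dots,x_m$ of $Q_1$, so $m=\dim Q_1$, and suppose toward a contradiction that $f=\sum_{i=1}^{m}\alpha_i\phi(x_i)$ is potentially nilpotent for some scalars $\alpha_1,\dots,\alpha_m$ not all zero. Put $x=\sum_{i=1}^{m}\alpha_i x_i$; then $x$ is a nonzero element of $Q_1$ by linear independence of the $x_i$, and linearity of $\phi$ gives $\mathrm{ad}_x|_{\mathfrak{m}_0}=f$. Thus $\mathrm{ad}_x$ restricted to $\mathfrak{m}_0$ would be potentially nilpotent, contradicting Lemma \ref{lem1}. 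Hence $\phi(x_1),\dots,\phi(x_m)$ are potentially nil-independent, and in particular linearly independent (a nontrivial vanishing relation among them would produce the zero map, which is potentially nilpotent). This exhibits a potentially nil-independent family of exactly $m$ derivations of $\mathfrak{m}_0$, so $\dim Q_1=m$ does not exceed the maximal number of potentially nil-independent derivations of $\mathfrak{m}_0$. Replacing $\mathrm{ad}_x|_{\mathfrak{m}_0}$ by $\mathrm{ad}_y|_{\mathfrak{m}_2}$ and invoking Lemma \ref{lem1} for $y\in Q_2$ gives the corresponding bound for $\dim Q_2$.

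I do not anticipate a serious obstacle once Lemma \ref{lem1} is in hand: the whole argument reduces to the linearity of the restriction-of-adjoint map together with reading off the definition of potential nil-independence. The point that demands care is the correct use of Lemma \ref{lem1}, which must be applied to the single combined element $x=\sum_i\alpha_i x_i\in Q_1$ rather than to the basis vectors separately, and with ``potentially nilpotent'' understood for the restriction $\mathrm{ad}_x|_{\mathfrak{m}_0}$ — precisely the form in which Lemma \ref{lem1} was established via Proposition \ref{dif0}. One should also note that the bound is meaningful only when finitely many potentially nil-independent derivations exist, since otherwise the same construction applied to arbitrarily large finite subsets of a basis of $Q_1$ would already force that maximal number to be infinite.
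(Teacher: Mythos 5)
Your proposal is correct and follows essentially the same route as the paper: take a basis $x_1,\dots,x_m$ of $Q_1$, observe that $\sum_i\alpha_i\,\mathrm{ad}_{x_i}=\mathrm{ad}_{\sum_i\alpha_i x_i}$, and invoke Lemma \ref{lem1} to conclude that a potentially nilpotent combination forces all $\alpha_i=0$, so the $\mathrm{ad}_{x_i}$ form a potentially nil-independent family of size $\dim Q_1$. Your write-up merely makes explicit the well-definedness and linearity of the restriction map $x\mapsto \mathrm{ad}_x|_{\mathfrak{m}_0}$, which the paper leaves implicit.
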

\begin{proof} Let $\{{x_{1}, x_{2},\dots ,x_{m}}\}$ be a basis of $Q_1$. If $\sum_{i=1}^{m}\alpha_{1}ad_{x_{i}}=ad_{\sum\limits_{i=1}^{m}\alpha_{1}{x_{i}}}$ is a potentially nilpotent derivation of $\mathfrak{m}_0$, then by applying Lemma \ref{lem1} we obtain $\sum\limits_{i=1}^{m}\alpha_{1}{x_{i}}=0$, which imply $\alpha_{i}=0, i=1,\dots , m.$ Therefore,
the operators $ad_{x_{1}},\dots ,ad_{x_m}$ are potentially nil-independent.
Similarly, the rest assertion of proposition can be obtained.
\end{proof}

\begin{cor}
\label{cor1} The maximal number of potentially nil-independent derivations of $\mathfrak{m}_0$ and $\mathfrak{m}_2$ are not greater than $2$ and $1$, respectively.
\end{cor}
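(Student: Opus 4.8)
The plan is to reduce the statement to a dimension count by isolating, inside the description of $\Der(\mathfrak{m}_0)$ in Proposition \ref{dif0}, the finitely many parameters that obstruct potential nilpotency. Reading off the diagonal coefficients, a derivation $d\in\Der(\mathfrak{m}_0)$ acts on $e_1$ with $e_1$-coefficient $\alpha_1$ and on $e_k$ ($k\ge 2$) with $e_k$-coefficient $(k-2)\alpha_1+\beta_2$. I would therefore introduce the linear map $\phi\colon\Der(\mathfrak{m}_0)\to\mathbb{F}^2$, $\phi(d)=(\alpha_1,\beta_2)$, whose linearity is immediate from the parametrization. The whole argument then rests on the claim that $\ker\phi$ consists exactly of the potentially nilpotent derivations; for the upper bound only the inclusion $\ker\phi\subseteq\{\text{potentially nilpotent}\}$ is needed.

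To prove that inclusion, suppose $\alpha_1=\beta_2=0$. Then the formulas of Proposition \ref{dif0} reduce to $d(e_1)=\sum_{i=2}^{t}\alpha_i e_i$ and $d(e_k)=\sum_{i=3}^{t}\beta_i e_{i+k-2}$ for $k\ge 2$, so $d$ sends each $e_m$ into the span of basis vectors of strictly larger index (index $\ge 2$ when $m=1$, and index $\ge m+1$ when $m\ge 2$). Iterating, $d^{i}(e_m)$ lies in $\mathrm{span}\{e_j : j\ge m+i\}$, whence $Im\ d^{i}\subseteq \mathrm{span}\{e_j : j\ge i+1\}$. Since every nonzero element of $\mathfrak{m}_0$ is a finite combination with a largest index, $\cap_{i=1}^{\infty} Im\ d^{i}=0$, i.e. $d$ is potentially nilpotent. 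The same index-shift argument applies verbatim to $\mathfrak{m}_2$: setting $\alpha_1=0$ in Proposition \ref{dif0} makes $d$ strictly raise the index, so $\ker\psi\subseteq\{\text{potentially nilpotent}\}$ where $\psi\colon\Der(\mathfrak{m}_2)\to\mathbb{F}$, $\psi(d)=\alpha_1$.

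Finally I would run the dimension count. If $d_1,\dots,d_m$ were potentially nil-independent derivations of $\mathfrak{m}_0$ with $m\ge 3$, the vectors $\phi(d_1),\phi(d_2),\phi(d_3)\in\mathbb{F}^2$ would be linearly dependent, giving scalars $(\lambda_1,\lambda_2,\lambda_3)\ne 0$ with $\sum_{i=1}^3\lambda_i\phi(d_i)=0$; then $\sum_{i=1}^3\lambda_i d_i\in\ker\phi$ is potentially nilpotent, contradicting nil-independence. Hence $m\le 2$. Replacing $\phi\colon\Der(\mathfrak{m}_0)\to\mathbb{F}^2$ by $\psi\colon\Der(\mathfrak{m}_2)\to\mathbb{F}$ and using that $\mathbb{F}$ is one-dimensional gives the bound $1$ for $\mathfrak{m}_2$. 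I expect the only real obstacle to be the potential-nilpotency implication in the middle paragraph: one must control the infinite intersection $\cap_i Im\ d^i$ rigorously rather than appealing to finite-dimensional nilpotency, and the clean way to do so is the uniform index-shift estimate $Im\ d^i\subseteq\mathrm{span}\{e_j:j\ge i+1\}$ established above.
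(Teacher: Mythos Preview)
Your proof is correct and follows essentially the same approach as the paper: both reduce to the observation that the pair $(\alpha_1,\beta_2)$ (respectively $\alpha_1$) controls potential nilpotency via Proposition~\ref{dif0}, and then use that three vectors in $\mathbb{F}^2$ (respectively two in $\mathbb{F}$) are necessarily linearly dependent. Your index-shift argument making the inclusion $\ker\phi\subseteq\{\text{potentially nilpotent}\}$ explicit is more detailed than the paper's, which leaves that step implicit (the same fact also appears in the proof of Lemma~\ref{lem1}).
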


\begin{proof}
Suppose, the maximal number of potentially nil-independent derivations of $\mathfrak{m}_0$ is more than 2. Then there exist potentially nil-independent derivations $d_{1}, d_2, d_3$ of $\mathfrak{m}_0$.

We set
 $$d_s(e_{1})=\sum\limits_{i=1}^{t}\alpha_{i}^se_{i}, \quad d_s(e_{k})=((k-2)\alpha_{1}^s+\beta_{2}^s)e_{k}+\sum\limits_{i=3}^{t}\beta_{i}^se_{i+k-2}, \quad s=1, 2, 3.$$

Since for any values $\alpha_{i}^s,\beta_{2}^s$
the vectors $(\alpha_{1}^s, \beta_{2}^s), s=1, 2,3$ are linearly dependent, we conclude that $d_1, d_2$ and $d_3$ are potentially dependent derivations. However, we can choose values $\alpha_{i}^s,\beta_{2}^s, s=1,2$ such that the vectors $(\alpha_{i}^s, \beta_{2}^s), s=1, 2$ are linearly independent and they define nil-independent derivation.
\end{proof}

\begin{thm}\label{thm3} Let ${M}_0$ be a pro-solvable Lie algebra with maximal pro-nilpotent ideal $\mathfrak{m}_0$. Then it admits a basis $\{x, y, e_{1}, e_{2}, \dots\}$ such that the multiplication table of ${M}_0$ in this basis has the following form
$${M}_0(\beta):\left\{\begin{array}{ll}
[e_{i} ,e_{1}]=e_{i+1},\quad i\geq2,\\[1mm]
[x,e_1]=-e_1,\\[1mm]
[x,e_i]=(1-i)e_i-\sum\limits_{k=3}^{t}\beta_{k}e_{t+i-2},\\[1mm]
[y,e_i]=-e_i,\quad i\geq 2,
 \end{array}\right.$$
 where $\beta=(\beta_3,\beta_4,\dots,\beta_{t}).$
\end{thm}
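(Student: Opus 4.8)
The plan is to realize $M_0$ as an extension of $\mathfrak{m}_0$ by the two-dimensional space $Q_1$ and to reduce the two derivations coming from $Q_1$ to a canonical pair, the Jacobi identity being the engine that makes the reduction possible.

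First I would fix the dimension and basis. By Corollary \ref{cor1} together with Proposition \ref{thm1}, maximality of $Q_1$ forces $\dim Q_1 = 2$; choose a basis $\{x,y\}$. Since $\mathfrak{m}_0$ is an ideal, the restrictions $d_x:=ad_{x}|_{\mathfrak{m}_0}$ and $d_y:=ad_{y}|_{\mathfrak{m}_0}$ are derivations of $\mathfrak{m}_0$, so they have the shape of Proposition \ref{dif0}, with \emph{characteristic pairs} $\chi(d_x)=(\alpha_1^x,\beta_2^x)$ and $\chi(d_y)=(\alpha_1^y,\beta_2^y)$. The proof of Lemma \ref{lem1} identifies the potentially nilpotent derivations as exactly those with $\chi=(0,0)$; as $ad_{x},ad_{y}$ are nil-independent, the two pairs are linearly independent in $\mathbb{F}^2$.

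Second I would extract the structural consequences of the Jacobi identity. Writing $[x,y]=ax+by+m$ with $m\in\mathfrak{m}_0$ and using $ad_{[x,y]}|_{\mathfrak{m}_0}=[d_x,d_y]$, I note that $\chi$ is a Lie homomorphism into the abelian $\mathbb{F}^2$ whose kernel is the potentially nilpotent derivations (in particular all inner ones). Applying $\chi$ gives $a\,\chi(d_x)+b\,\chi(d_y)=\chi([d_x,d_y])=0$, so linear independence yields $a=b=0$; thus $[x,y]=m\in\mathfrak{m}_0$ and $[d_x,d_y]=ad_{m}|_{\mathfrak{m}_0}$. Consequently, on the two-dimensional quotient $\mathfrak{m}_0/\mathfrak{m}_0^2=\langle \bar e_1,\bar e_2\rangle$, where every inner derivation acts trivially, the induced operators $D_x,D_y$ commute.

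Third comes the normalization. On $\mathfrak{m}_0/\mathfrak{m}_0^2$ the operators $D_x,D_y$ are lower triangular with diagonals the characteristic pairs; recombining $x,y$ linearly I arrange $\chi(d_x)=(-1,-1)$ and $\chi(d_y)=(0,-1)$, matching the diagonal of the target table. The commutativity $[D_x,D_y]=0$ is precisely the condition that the off-diagonal entries of $D_x,D_y$ define a functional proportional to $(a,c)\mapsto a-c$, which is exactly what allows a single shear automorphism $e_1\mapsto e_1+\gamma e_2$ of $\mathfrak{m}_0$ to clear the $e_2$-components of $[x,e_1]$ and $[y,e_1]$ at once. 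Finally I would absorb the higher-order coefficients by replacing $x,y$ with $x-u$, $y-v$ for suitable $u,v\in\mathfrak{m}_0$: such a change alters $d_x,d_y$ only by inner (hence potentially nilpotent) derivations, preserving the characteristic pairs, while the inner derivations $ad_{e_j}$ ($j\geq 1$) sweep out enough of the potentially nilpotent part of $\Der(\mathfrak{m}_0)$ to reach $[x,e_1]=-e_1$, $[y,e_1]=0$ and $[y,e_i]=-e_i$. The coefficients of $[x,e_i]$ that cannot be absorbed this way constitute the invariant family $\beta=(\beta_3,\dots,\beta_t)$. It then remains to pin down $[x,y]$: a direct check from the normalized table gives $[d_x,d_y]=0$, and combined with $[d_x,d_y]=ad_{m}|_{\mathfrak{m}_0}$ for $m=[x,y]\in\mathfrak{m}_0$ this yields $ad_{m}|_{\mathfrak{m}_0}=0$; since $Z(\mathfrak{m}_0)=0$ we get $m=0$, i.e.\ $[x,y]=0$, completing the table $M_0(\beta)$. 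I expect the main obstacle to be this third step, and specifically the fact that one shear can clear both $e_2$-components simultaneously only because $[D_x,D_y]=0$; thus the Jacobi identity must be exploited \emph{before} the change of basis, and tracking which higher coefficients are genuinely removable (so that the invariants really are $\beta_3,\dots,\beta_t$) is the remaining technical bookkeeping.
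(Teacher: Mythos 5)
Your overall route (restrict $ad_x,ad_y$ to derivations of $\mathfrak{m}_0$, normalize the characteristic pairs, clean up by basis changes, invoke Jacobi) is essentially the paper's, and your $\chi$-homomorphism argument that $[x,y]\in\mathfrak{m}_0$ is a welcome justification of a step the paper passes over silently. However, there is one genuine gap, exactly at the step you yourself flagged as ``bookkeeping'': you claim that the inner derivations $ad_{e_j}$ sweep out enough of the potentially nilpotent part of $\Der(\mathfrak{m}_0)$ to reach $[y,e_i]=-e_i$. They do not. For $v=\sum_j v_je_j\in\mathfrak{m}_0$ one has $ad_v(e_1)=\sum_{j\ge2}v_je_{j+1}$ and $ad_v(e_k)=-v_1e_{k+1}$ for $k\ge2$; in the parametrization of Proposition \ref{dif0} the inner derivations therefore occupy only the coefficients $\alpha_3,\dots,\alpha_t$ and $\beta_3$, never $\beta_4,\dots,\beta_t$. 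So replacing $y$ by $y-v$ can clear the tail of $[y,e_1]$ and the single coefficient $\beta_3'$, but the coefficients $\beta_4',\dots,\beta_t'$ of $[y,e_k]$ are exactly as non-removable by such substitutions as the surviving invariants $\beta_4,\dots,\beta_t$ of $[x,e_k]$. If inner absorption were the whole mechanism, the normal form would carry a second parameter family attached to $y$.

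What actually kills those coefficients is the Jacobi identity, which you hold in your hands as $[d_x,d_y]=ad_{m}|_{\mathfrak{m}_0}$ but deploy only at the very end, and only ``from the normalized table'' --- which makes that last step circular, since the table with $[y,e_i]=-e_i$ has not yet been legitimately reached. The repair stays entirely inside your framework: with the characteristic pairs normalized to $(-1,-1)$ and $(0,-1)$, a direct computation gives $[d_x,d_y](e_k)=\sum_{i\ge3}(2-i)\beta_i'\,e_{i+k-2}$ for $k\ge2$, whereas $ad_m(e_k)=-m_1e_{k+1}$; comparing coefficients forces $\beta_i'=0$ for $i\ge4$ and ties $\beta_3'$ to $m_1$, after which your shear, the genuine inner absorptions, and the centerlessness argument for $[x,y]=0$ all go through. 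This is in substance what the paper extracts from $Z(e_1,x,y)=0$ and $Z(e_2,x,y)=0$: the identity must be applied on the higher $e_k$'s, not only on the quotient $\mathfrak{m}_0/\mathfrak{m}_0^2$.
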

\begin{proof} According to Proposition \ref{thm1} and Corollary \ref{cor1} we get the existence of a basis $\{x,y,e_{1},e_{2},\dots\}$ of ${M}_0$ such that $\mathfrak{m}_0=\{e_{1},e_{2},\dots\}$ and $Q_1=\{x,y\}$. By using derivation property we derive
\[[x,e_{1}]=-e_{1}-\sum\limits_{i=2}^{t}\alpha_{i}e_{i},\quad [y,e_{1}]=-\sum\limits_{i=2}^{t}\alpha_{i}e_{i},\quad
[x,y]=\sum\limits_{i=1}^{t}\gamma_ie_i,\]
\[[y,e_{k}]=-e_{k}-\sum\limits_{i=k+1}^{t+k-2}\beta_{i-k+2}'e_{i},  \quad [x,e_{k}]=(2-k)e_{k}+\sum\limits_{i=k+1}^{t+k-2}\beta_{i-k+2}e_{i},\quad k\geq2.\]

Taking $e_1'=e_1+\alpha_2e_2$ one can assume $\alpha_2=0$.

Now, setting
$$x'=x+\sum\limits_{i=2}^{t-1}\alpha_{i+1}e_i,\,\,\, y'=y+\gamma_1e_1+\sum\limits_{i=2}^{t-1}\alpha_{i+1}'e_{i}$$
we can assume
$$[x,e_1]=-e_1, \quad  [y,e_1]=-\alpha_2'e_2.$$

The equalities $[e_1,[x,y]]+[x,[y,e_1]]+[y,[e_1,x]]=0$ and $[e_2,[x,y]]+[x,[y,e_2]]+[y,[e_2,x]]=0$ imply $\alpha_2=0,\ \  \gamma_i=0, \  2\le i\le t$ and
$\beta'_i=0, \ 3\le i\le t$.

Finally, putting $x'=x+y$ we obtain the multiplication table in the assertion of the theorem.
\end{proof}

\begin{thm}\label{thm3} Let $M_2$ be pro-solvable Lie algebra with maximal pro-nilpotent ideal $\mathfrak{m}_2$. Then it is admits a basis $\{x, e_{1}, e_{2}, \dots\}$ such that the multiplication table of ${M}_2$ in this basis has the following form
$$M_2(\gamma):\left\{\begin{array}{ll}
[e_{1} ,e_{i}]=e_{i+1}, &i\geq 2, \\[1mm]
[e_2,e_j]=e_{j+2},&j\geq 3,\\[1mm]
[x,e_1]=-e_1,\\[1mm]
[x,e_i]=-ie_i-\sum\limits_{j=4}^{n+1}\gamma_je_{j+i-2},& i\geq 2,
\end{array}\right.$$
where $\gamma=(\gamma_4,\gamma_5,\ldots,\gamma_{n+1}).$
\end{thm}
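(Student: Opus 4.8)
The plan is to follow the same template as the proof of the corresponding theorem for $M_0$, but adapted to the structure of $\mathfrak{m}_2$ and the fact that, by Corollary~\ref{cor1}, the maximal number of potentially nil-independent derivations of $\mathfrak{m}_2$ is $1$. First I would invoke Proposition~\ref{thm1} together with Corollary~\ref{cor1} to conclude that $\dim Q_2 \le 1$, and since $\mathfrak{m}_2$ is a \emph{proper} maximal pro-nilpotent ideal we must have $\dim Q_2 = 1$. This yields a basis $\{x, e_1, e_2, \dots\}$ with $\mathfrak{m}_2 = \{e_1, e_2, \dots\}$ and $Q_2 = \{x\}$. The single outer generator $x$ is what makes $M_2$ simpler than $M_0$: there is no $[x,y]$ bracket to worry about.

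Next I would write down the most general admissible action of $\operatorname{ad}_x$ on $\mathfrak{m}_2$. Since $\operatorname{ad}_{x|\mathfrak{m}_2}$ must be a derivation of $\mathfrak{m}_2$, I would substitute the general form of $\operatorname{Der}(\mathfrak{m}_2)$ from Proposition~\ref{dif0} (with the sign convention $[x,e_i] = -\operatorname{ad}_{e_i}(x)$, i.e.\ $[x,\cdot] = d$ for the appropriate $d$). By Lemma~\ref{lem1}, $\operatorname{ad}_x$ is non-potentially nilpotent, so the eigenvalue parameter $\alpha_1$ from the derivation formula must be nonzero; rescaling $x$ I would normalize $\alpha_1 = -1$, which produces the diagonal coefficients $[x,e_1] = -e_1$ and the leading terms $[x,e_i] = -i e_i + \dots$ matching the target table. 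The remaining terms are governed by the parameters $\alpha_3,\dots,\alpha_n,\beta_3,\dots,\beta_n$ appearing in $\operatorname{Der}(\mathfrak{m}_2)$.

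The substantive work is then a change-of-basis argument to kill the redundant parameters. I would absorb as many coefficients as possible by replacing $x$ with $x' = x + \sum_i c_i e_i$ and replacing the $e_i$ by suitable combinations shifting along the grading, exactly as the $M_0$ proof uses the substitutions $e_1' = e_1 + \alpha_2 e_2$ and $x' = x + \sum \alpha_{i+1} e_i$. Each such redefinition must be checked to preserve the multiplication relations of $\mathfrak{m}_2$ (both the $[e_1,e_i]$ and the $[e_2,e_j]$ families), and this is where I expect the main obstacle to lie: unlike $\mathfrak{m}_0$, the algebra $\mathfrak{m}_2$ has \emph{two} families of structure constants, so a shift of $x$ by $e_i$ interacts with both $[e_1,\cdot]$ and $[e_2,\cdot]$ simultaneously, and one must verify that the normalizations are consistent and that the surviving free parameters are precisely $\gamma_4,\dots,\gamma_{n+1}$. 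I would organize the bookkeeping by the weight (index) grading: the $\operatorname{ad}_x$-eigenvalue $-i$ on $e_i$ forces the off-diagonal term $[x,e_i]$ to lie in strictly higher-weight components, and comparing weights term-by-term shows which $c_i$ can remove which parameter.

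Finally I would impose the Jacobi identities that couple $x$ to the internal brackets, namely $[x,[e_1,e_i]] = [[x,e_1],e_i] + [e_1,[x,e_i]]$ and the analogue with $[e_2,e_j]$, to pin down the relations among the remaining $\gamma$'s and confirm that the reduced table $M_2(\gamma)$ indeed satisfies all Lie-algebra axioms. After the dust settles, the surviving bracket takes the stated form $[x,e_i] = -i e_i - \sum_{j=4}^{n+1}\gamma_j e_{j+i-2}$ with $\gamma = (\gamma_4,\dots,\gamma_{n+1})$, completing the description.
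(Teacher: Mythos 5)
Your proposal follows essentially the same route as the paper: invoke Proposition~\ref{thm1} and Corollary~\ref{cor1} to get $Q_2=\langle x\rangle$, impose that $\mathrm{ad}_x|_{\mathfrak{m}_2}$ has the form given in Proposition~\ref{dif0}, use Lemma~\ref{lem1} to get $\alpha_1\neq 0$ and rescale, and then absorb the removable parameters by the shift $x'=x+\beta_3e_1-\sum_i\alpha_{i+1}e_i$, leaving exactly the surviving family $\gamma_4=\beta_4$, $\gamma_j=\beta_j-\alpha_{j-1}$. The paper in fact only needs to modify $x$ (no change of the $e_i$ is required), but that is a minor simplification of the same argument.
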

\begin{proof} Due to Proposition \ref{thm1} and Corollary \ref{cor1} we have the existence of a basis $\{x,e_{1},e_{2},\dots\}$ of $M_2$ with $\mathfrak{m}_2=\{e_{1},e_{2},\dots\}$ and $Q_2=\{x\}$.
By using derivation property we derive
$$\begin{array}{llll}
[x,e_1]=-\alpha_1e_1-\sum\limits_{i=3}^{n}\alpha_ie_i,\quad\quad
[x,e_2]=-2\alpha_1e_2-\sum\limits_{i=3}^{n}\beta_ie_i,\\[1mm]
[x,e_i]=-i\alpha_1e_{i}-\beta_3e_{i+1}+\sum\limits_{j=i+2}^{n+i-2}
(\alpha_{j-i+1}-\beta_{j-i+2})e_j+\alpha_ne_{n+i-1},&i\geq3.
\end{array}$$

Since $\alpha_1\neq 0$ (because $ad_x$ is not potentially nilpotent), by scaling $x'=\frac{x_1}{\alpha_1}$ we can assume $\alpha_1=1$.

Making the change of basis element $x$ as follows
$$x'=x+\beta_3e_1-\sum\limits_{i=2}^{n-1}\alpha_{i+1}e_i,$$
we obtain
$$[x',e_1]=-e_1,$$
$$[x',e_2]=-2e_2-\sum\limits_{i=3}^{n}\beta_ie_i+\beta_3e_3+\sum\limits_{i=3}^{n-1}\alpha_{i+1}e_{i+2}
=-2e_2-\beta_4e_4+\sum\limits_{i=5}^{n}(\alpha_{i-1}-\beta_i)e_i+\alpha_ne_{n+1},$$
$$[x',e_i]=-ie_i-\beta_4e_{i+2}-\sum\limits_{j=5}^{n}(\beta_{j}-\alpha_{j-1})e_{j+i-2}+\alpha_ne_{n+i-1},\ \ i\geq 3.$$
Denoting $\gamma_4:=\beta_4$ and $\gamma_j=\beta_{j}-\alpha_{j-1},\  5\le j\le n$ we get the family of algebras $M_2(\gamma)$.
\end{proof}

Let us  consider the following pro-solvable Lie algebras
$$\begin{array}{lll}
\widetilde{{M}_0}:\left\{\begin{array}{llll}
[e_{i} ,e_{1}]=e_{i+1}, & i\geq 2, &\\[1mm]
[x,e_1]=-e_1,\\[1mm]
[x,e_i]=-(i-1)e_i,& i\geq 2,&\\[1mm]
[y,e_i]=-e_i,& i\geq 2,&
 \end{array}\right.& \widetilde{M_2}:\left\{\begin{array}{lll}
[e_{1} ,e_{i}]=e_{i+1}, & i\geq 2,& \\[1mm]
[e_2,e_j]=e_{j+2},& j\geq 3,&\\[1mm]
[x,e_1]=-e_1,\\[1mm]
[x,e_i]=-ie_i,& i\geq 2.&
 \end{array}\right.
\end{array}$$

\begin{prop}\label{prop3} The spaces of derivations of the algebras $\widetilde{{M}_0}$ and $\widetilde{M_2}$ are the following
$$\Der(\widetilde{{M}_0}): \quad \left\{\begin{array}{lll}
d(e_1)=\alpha_1e_1+\sum\limits_{i=3}^{n}\alpha_ie_i,&
d(x)=-\beta_3e_1+\sum\limits_{i=2}^{n-1}(i-1)\alpha_{i+1}e_i,\\[2mm]
d(e_i)=((i-2)\alpha_1+\beta_2)e_i+\beta_{3}e_{i+1},\quad i\geq2,&
d(y)=\sum\limits_{i=2}^{n-1}\alpha_{i+1}e_i.
\end{array}\right.$$
$$\Der(\widetilde{M_2}): \quad \left\{\begin{array}{llll}
d(e_1)=\alpha_1e_1+\sum\limits_{i=3}^{n-1}\alpha_ie_i,& & d(e_2)=2\alpha_1e_2+\beta_3e_3+\sum\limits_{i=4}^{n}\alpha_{i-1}e_i,\\[1mm]
d(e_i)=i\alpha_1e_i+\beta_3e_{i+1}-\alpha_3e_{i+2},& i\geq3, &d(x)=\beta_3e_1-\sum\limits_{i=2}^{n-1}i\alpha_{i+1}e_i.\\[1mm]
\end{array}\right.
$$

\end{prop}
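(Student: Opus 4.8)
The plan is to determine an arbitrary derivation $d$ by first restricting it to the pro-nilpotent ideal, invoking Proposition \ref{dif0}, and then propagating the resulting constraints to the complementary generators. Throughout I use that every image $d(e_i),d(x),d(y)$ is a finite linear combination of basis vectors, which is what produces the cut-off $n$ in the statement.

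\emph{Step 1 (reduction to the ideal).} In both algebras the pro-nilpotent ideal coincides with the derived subalgebra: for $\widetilde{M_0}$ one has $e_1=-[x,e_1]$ and $e_i=-[y,e_i]$ for $i\ge 2$, so $\mathfrak{m}_0=[\widetilde{M_0},\widetilde{M_0}]$, while $e_1=-[x,e_1]$ and $e_i=-\tfrac1i[x,e_i]$ give $\mathfrak{m}_2=[\widetilde{M_2},\widetilde{M_2}]$. Since the derived subalgebra is a characteristic ideal, every derivation maps it into itself, so $d(\mathfrak{m}_0)\subseteq\mathfrak{m}_0$ and the restriction $d|_{\mathfrak{m}_0}$ is a derivation of the subalgebra $\mathfrak{m}_0$ (and likewise for $\mathfrak{m}_2$). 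Hence Proposition \ref{dif0} applies to $d|_{\mathfrak{m}_0}$ and $d|_{\mathfrak{m}_2}$, and the images $d(e_i)$ already have the shape recorded there, depending on parameters $\alpha_1,\alpha_2,\dots$ and $\beta_2,\beta_3,\dots$.

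\emph{Step 2 (the complementary generators).} I next write $d(x)=\sum_i a_i e_i+\lambda x+\mu y$ and $d(y)=\sum_i b_i e_i+\nu x+\rho y$ for $\widetilde{M_0}$ (and $d(x)=\sum_i a_i e_i+\lambda x$ for $\widetilde{M_2}$) and impose the derivation identity on each defining bracket $[x,e_k]$, $[y,e_k]$, and, for $\widetilde{M_0}$, on $[x,y]=0$. Comparing the coefficient of $e_k$ on the two sides of the identity coming from $[x,e_k]$ (resp.\ $[y,e_k]$) isolates the $x$- and $y$-components of $d(x),d(y)$ and forces $\lambda=\mu=\nu=\rho=0$, i.e.\ $d(x),d(y)\in\mathfrak{m}_0$ (resp.\ $d(x)\in\mathfrak{m}_2$). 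The off-diagonal coefficient comparisons then do two things at once: they express the ideal-coefficients in terms of the $\alpha$'s (yielding $a_1=-\beta_3$, $a_i=(i-1)\alpha_{i+1}$, $b_1=0$, $b_i=\alpha_{i+1}$ for $\widetilde{M_0}$, and $a_1=\beta_3$, $a_i=-i\alpha_{i+1}$ for $\widetilde{M_2}$), and they annihilate the surplus ideal-parameters. For $\widetilde{M_0}$ the $[x,e_k]$-identity yields $(i-2)\beta_i=0$, hence $\beta_i=0$ for $i\ge 4$, while $[y,e_1]=0$ forces $\alpha_2=0$; for $\widetilde{M_2}$ the $[x,e_k]$-identities pin the $\beta$-parameters to the $\alpha$-parameters and force $\alpha_n=0$, collapsing the long tail of Proposition \ref{dif0} to the short form displayed in the statement.

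Finally I collect the surviving parameters and check that the brackets not yet used, namely $[e_i,e_1]$ for $\widetilde{M_0}$ and $[e_1,e_i]$, $[e_2,e_j]$ for $\widetilde{M_2}$, impose no new condition, since they hold automatically once $d|_{\mathfrak{m}}$ is a derivation of the ideal; assembling the pieces reproduces the two lists. I expect the only real difficulty to be organizational: one must handle an infinite family of identities indexed by $k$ and track the interplay between the index shift $e_k\mapsto e_{k+1}$ and the scalar weights $-(k-1)$, $-k$ carried by $ad_x$. The key simplification is that each such identity is \emph{diagonal plus a single shift}, so matching coefficients degree by degree kills all but finitely many parameters and leaves a consistent finite system.
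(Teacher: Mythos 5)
Your argument is correct and all of the coefficient comparisons you indicate do close up to the stated lists (for $\widetilde{M_0}$: the identity on $[x,e_1]$ gives $\lambda=0$ and $a_i=(i-1)\alpha_{i+1}$, the one on $[x,e_2]$ gives $\mu=0$, $a_1=-\beta_3$ and $(i-2)\beta_i=0$, the ones on $[y,e_1]$, $[y,e_2]$ give $\nu=\rho=0$, $\alpha_2=0$, $b_1=0$, $b_i=\alpha_{i+1}$, and $[x,y]=0$ is then automatic), but your organization is genuinely different from the paper's. The paper does not invoke Proposition \ref{dif0} here at all: it writes $d(e_1)$ and $d(e_2)$ with possible $x$- and $y$-components, reconstructs $d(e_k)$ for $k\ge 3$ recursively from $d(e_{k+1})=d([e_k,e_1])$, and eliminates the transversal components through auxiliary identities such as $d([e_2,e_3])=d([e_3,e_4])=0$ and $d(e_1)=d([e_1,x])$, i.e.\ it essentially re-derives the internal form of the derivation inside the ideal. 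Your observation that $\mathfrak{m}_0=[\widetilde{M_0},\widetilde{M_0}]$ and $\mathfrak{m}_2=[\widetilde{M_2},\widetilde{M_2}]$ are characteristic ideals gives $d(\mathfrak{m}_0)\subseteq\mathfrak{m}_0$ (resp.\ $d(\mathfrak{m}_2)\subseteq\mathfrak{m}_2$) for free, lets you import Proposition \ref{dif0} wholesale, and correctly explains why the purely internal brackets $[e_i,e_1]$, $[e_1,e_i]$, $[e_2,e_j]$ impose no further conditions; this shortcut reduces the work to the finitely many families of identities involving $x$ and $y$ and is, if anything, cleaner than the paper's computation. The only caution is bookkeeping in the $\widetilde{M_2}$ case, where the tail parameters of Proposition \ref{dif0} (the $\beta_{j-i+2}-\alpha_{j-i+1}$ terms and the $-\alpha_ne_{n+i-1}$ term) must all be tracked when matching the coefficient of the top basis vector to obtain $\alpha_n=0$; your sketch handles this correctly in outline.
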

\begin{proof}

We give the description of $Der(\widetilde{{M}_0})$ and the space of derivation for the algebra
$\widetilde{M_2}$ can be obtained by analogy.

Without loss of generality one can assume that $$\begin{array}{lll}
  d(e_1)=\sum\limits_{i=1}^{n}\alpha_{i}e_i+\alpha_{1,1} x+\alpha_{2,2}y,&
  d(e_2)=\sum\limits_{i=1}^{n}\beta_{i}e_i+\beta_{1,1}x+\beta_{2,2}y,\\[1mm]
  d(x)=\sum\limits_{i=1}^{n}\gamma_{i}e_i+\gamma_{1,1}x+\gamma_{2,2}y,&
  d(y)=\sum\limits_{i=1}^{n}\tau_{i}e_i+\tau_{1,1}x+\tau_{2,2}y,\end{array}$$
where  $d\in Der(\widetilde{{M}_0})$ and $n$ is a big enough natural number.

 By using the table of multiplication of the algebra ${M}_0$ and derivation property we obtain
 \[d(e_3)=d([e_2,e_1])=-\beta_{1,1}e_1+(\alpha_{1,1}+\alpha_{2,2})e_2+(\alpha_1+\beta_2)e_3+
 \sum\limits_{i=4}^{n+1}\beta_{i-1}e_i,\]
\[d(e_4)=d([e_3,e_1])=(3\alpha_{1,1}+2\alpha_{2,2})e_3+(2\alpha_{1}+\beta_2)e_4+\sum\limits_{i=5}^{n+2}\beta_{i-2}e_i,\]

From  $d([e_2,e_3])=d([e_3,e_4])=0$ and $d(e_1)=d([e_1,x])$ we derive

$$\alpha_{1,1}=\alpha_{2,2}=\beta_{1,1}=\beta_{2,2}=\gamma_{1,1}=\gamma_n=0,\quad \gamma_i=(i-1)\alpha_{i+1},\quad 2\le i\le n-1.$$

The equalities
$$[x,e_2]=-e_2, \quad [y,e_1]=0, \quad [y,e_2]=-e_2$$
imply
$$\beta_i=0,\quad 4\le i\le n,\quad \gamma_1=-\beta_3, \quad \alpha_2=\gamma_{2,2}=\tau_{1,1}=\tau_1=\tau_{2,2}=\tau_n=0,\quad \tau_i=\alpha_{i+1},\quad 2\le i\le n-1.$$

By induction we get $d(e_{i+1})=d([e_i,e_1])=((i-1)\alpha_1+\beta_2)e_{i+1}+\beta_{3}e_{i+2},\ \  i\geq3,$ which completes the description of the space $Der(\widetilde{{M}_0})$.
\end{proof}

In the next theorem we prove the completeness of the algebras $\widetilde{{M}_0}$ and $\widetilde{M_2}$.
\begin{thm}\label{thm5} The pro-solvable Lie algebras $\widetilde{{M}_0}$ and $\widetilde{M_2}$ are complete.
\end{thm}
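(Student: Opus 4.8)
The plan is to show that each algebra is centerless and that every derivation is inner, which by Definition 2.3 (and the cohomological reformulation noted after equation (3.1)) is exactly the statement of completeness. Since Proposition \ref{prop3} has already given us explicit parametrized forms for $\Der(\widetilde{M_0})$ and $\Der(\widetilde{M_2})$, the bulk of the work reduces to two independent verifications per algebra.

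First I would dispose of the centers. For $\widetilde{M_0}$, a central element $z = \lambda x + \mu y + \sum c_i e_i$ must bracket trivially with $x$, $y$, and all $e_i$. Bracketing with $y$ gives $[y,z] = -\sum_{i\geq 2} c_i e_i + (\text{terms from } [y,x])$, forcing the $e_i$-coefficients to vanish; bracketing with $e_1$ then kills $\lambda$ and $\mu$ by examining the leading behavior of $\mathrm{ad}_x$ and $\mathrm{ad}_y$ on $\mathfrak{m}_0$ (these act diagonally with distinct eigenvalues $1-i$ and $1$). So $\mathrm{Center}(\widetilde{M_0}) = 0$, and the identical eigenvalue argument with the single operator $\mathrm{ad}_x$ (eigenvalues $-i$ on $e_i$, all distinct and nonzero) settles $\mathrm{Center}(\widetilde{M_2}) = 0$.

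For innerness, the strategy is to match the free parameters in the general derivation $d$ against the parameters available in an inner derivation $\mathrm{ad}_v$, $v \in \widetilde{M_0}$. Writing a general $v = ax + by + \sum_{i} c_i e_i$, I would compute $\mathrm{ad}_v$ on the generators $e_1, e_2, x, y$ and read off its effect in the same coordinates used in Proposition \ref{prop3}. Comparing $d(e_1) = \alpha_1 e_1 + \sum_{i\geq 3}\alpha_i e_i$ against $\mathrm{ad}_v(e_1)$ lets me solve for $a$ (from the $e_1$-coefficient, via the eigenvalue $-1$ of $\mathrm{ad}_x$) and for the $c_i$; matching the $d(e_i) = ((i-2)\alpha_1 + \beta_2)e_i + \beta_3 e_{i+1}$ term fixes $b$ (through $\beta_2$, produced by $\mathrm{ad}_y$) and the relevant $c_i$ (through $\beta_3$). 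One then checks that the induced values on $d(x)$ and $d(y)$ are automatically consistent — this consistency is guaranteed precisely because Proposition \ref{prop3} already encoded the Jacobi-derived constraints (e.g. $d(x) = -\beta_3 e_1 + \sum (i-1)\alpha_{i+1} e_i$), so the same parameters $\alpha_{i+1}, \beta_3$ reappear. The analogous bookkeeping works for $\widetilde{M_2}$ with $v = ax + \sum c_i e_i$.

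The main obstacle I anticipate is the verification on the complementary directions $d(x)$ and $d(y)$: after using $d(e_1)$ and the $d(e_i)$ to pin down $a, b$ and the $c_i$, one must confirm that $\mathrm{ad}_v(x)$ and $\mathrm{ad}_v(y)$ reproduce \emph{exactly} the expressions forced by Proposition \ref{prop3}, with no leftover discrepancy — in particular that the coefficient $-\beta_3$ on $e_1$ in $d(x)$ and the coefficients $(i-1)\alpha_{i+1}$ match what $\mathrm{ad}_v$ delivers. If a mismatch appeared it would signal an outer derivation; I expect none, because the dimension of the parameter space of $\Der(\widetilde{M_0})$ (governed by $\alpha_1, \beta_2, \beta_3$ and the finitely many $\alpha_i$) coincides with the number of free inner parameters once the center is trivial. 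Thus the decisive check is a dimension/parameter count showing $\dim \mathrm{H}^1 = 0$, which I would present as the closing step after exhibiting the explicit $v$ realizing an arbitrary $d$.
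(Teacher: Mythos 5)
Your proposal is correct and follows essentially the same route as the paper: both arguments check centerlessness directly from the multiplication tables and then use the explicit parametrization of $\Der(\widetilde{{M}_0})$ and $\Der(\widetilde{M_2})$ from Proposition \ref{prop3} to realize every derivation as $ad_v$ for an explicit $v$ (the paper simply writes down $v=-\beta_{3}e_1+\sum_{k}\alpha_{k+1}e_{k}-\alpha_1x+(\alpha_1-\beta_2)y$ rather than describing the coefficient-matching that produces it). The consistency on $d(x)$ and $d(y)$ that you flag as the main obstacle does hold, exactly because Proposition \ref{prop3} already forces $d(x)$ and $d(y)$ to be expressed in the same parameters $\alpha_{i+1},\beta_3$.
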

\begin{proof}
It follows from the corresponding table of multiplications that  $\widetilde{{M}_0}$ and $\widetilde{{M}_2}$  have trivial center.

Let us consider derivation $d\in Der(\widetilde{{M}_0})$ in the form as it presented in Proposition \ref{prop3}. Then $d=ad_a$ for $a=-\beta_{3}e_1+\sum\limits_{k=2}^{n-1}\alpha_{k+1}e_{k}-\alpha_1x+(\alpha_1-\beta_2)y.$

Let now $d'\in Der(\widetilde{M_2})$ has the form as in previous proposition. Then $d'=ad_{b}$ for $b=\beta_{3}e_1-\sum\limits_{k=2}^{n-2}\alpha_{k+1}e_{k}-\alpha_1x$.
\end{proof}

Finally, we are going to study the second cohomology groups for the algebras $\widetilde{{M}_0}$ and $\widetilde{M_2}$ in adjoint representations.

\begin{prop}\label{prop6} An arbitrary element $\varphi\in {\rm Z}^2(\widetilde{{M}_0},\widetilde{{M}_0})$ has the following form
$$\left\{\begin{array}{lll}
\varphi(e_i,e_1)=&(\gamma_1^{i+1}+\gamma_{1,1}^i)e_1+\sum\limits_{k=2}^{i}\frac{1}{i-k+1}\beta_k^{i+1}e_k+
\sum\limits_{k=i+2}^{p_{i+1}}\frac{1}{i-k+1}\beta_k^{i+1}e_k
+\alpha_{i+1}^{i,1}e_{i+1}-\\[1mm]
&-\sum\limits_{k=3}^{i}\frac{1}{i-k+1}\beta_{k-1}^ie_k-\sum\limits_{k=i+2}^{p_i}
\frac{1}{i-k+1}\beta_{k-1}^ie_k-((i-1)\beta_{1,1}^1+\beta_{2,2}^1)e_i+\gamma_{1,1}^{i+1}x+\gamma_{2,2}^{i+1}y,&\\[2mm]
\varphi(e_i,e_j)=&-(\gamma_{1}^je_{i+1}+((i-1)\gamma_{1,1}^j+\gamma_{2,2}^j)e_i)+
(\gamma_{1}^ie_{j+1}+((j-1)\gamma_{1,1}^i+\gamma_{2,2}^i)e_j),&\\[2mm]
\varphi(e_1,x)=&\beta_{1}^1e_1+\sum\limits_{k=3}^{p_1}\beta_k^1e_k+\beta_{1,1}^1x+\beta_{2,2}^1y,\\[2mm]
\varphi(e_i,x)=&(i-2)\gamma_{1}^ie_1+\sum\limits_{k=2}^{i-1}\beta_k^ie_k+\sum\limits_{k=i+1}^{p_i}\beta_k^ie_k+(\beta_2^2+(i-2)\beta_1^1)e_i+
(i-1)(\gamma_{1,1}^ix+\gamma_{2,2}^iy),& \\[2mm]
\varphi(e_1,y)=&\sum\limits_{k=1}^{q_1}\gamma^{1}_{k}e_k,\\[2mm]
\varphi(e_i,y)=&\gamma_{1}^{i}e_1+(\gamma_{2}^2+(i-2)\gamma_{1}^1)e_{i}+\tau_1e_{i+1}+\gamma_{1,1}^ix+\gamma_{2,2}^iy,&\\[2mm]
\varphi(x,y)=&-\tau_1e_1+\sum\limits_{k=2}^{q_1-1}(k-1)\gamma^{1}_{k+1}e_k+\sum\limits_{k=1}^{p_1-1}\beta^{1}_{k+1}e_k
\end{array}\right.$$
where $i,j\geq 2$.
\end{prop}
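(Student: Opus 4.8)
The plan is to pin down a generic cocycle $\varphi\in {\rm Z}^2(\widetilde{{M}_0},\widetilde{{M}_0})$ by expanding it on all pairs of basis vectors and then imposing the cocycle identity $Z(a,b,c)=0$ (that is, relation \eqref{eq5}) on a carefully chosen family of triples. First I would record the completely free initial expansions
$$\varphi(e_i,x)=\sum_k\beta_k^i e_k+\beta_{1,1}^i x+\beta_{2,2}^i y,\qquad \varphi(e_i,y)=\sum_k\gamma_k^i e_k+\gamma_{1,1}^i x+\gamma_{2,2}^i y,$$
together with analogous expansions of $\varphi(x,y)$ and of the $e$-$e$ values $\varphi(e_i,e_j)$, using antisymmetry to restrict to ordered pairs and the standing finiteness assumption to cut each sum at some $p_i,q_i$. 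The superscript convention in the statement shows that every coefficient in the final formulas is one of these, so the entire computation amounts to expressing the $e$-$e$ components through the ``mixed'' data $\varphi(\cdot,x),\varphi(\cdot,y),\varphi(x,y)$.

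The conceptual engine is that $ad_x$ acts diagonally on $\mathfrak{m}_0$ with $[x,e_1]=-e_1$ and $[x,e_i]=-(i-1)e_i$, so each $e_k$ is a weight vector. I would exploit this by evaluating $Z(x,b,c)=0$ for $b,c\in\{e_i,y\}$: after using antisymmetry this equation takes the shape $ad_x\varphi(b,c)-\varphi([x,b],c)-\varphi(b,[x,c])=[\varphi(x,b),c]-[\varphi(x,c),b]-\varphi(x,[b,c])$, whose right-hand side is built only from mixed data. Since $ad_x-(\mathrm{wt}(b)+\mathrm{wt}(c))$ is invertible on every weight component of weight $\neq \mathrm{wt}(b)+\mathrm{wt}(c)$, the off-resonant components of $\varphi(b,c)$ are uniquely solved in terms of $\varphi(x,\cdot)$ and $\varphi(y,\cdot)$; this is exactly what produces $\varphi(e_i,e_j)$ purely in terms of $\gamma_1^i,\gamma_{1,1}^i,\gamma_{2,2}^i$ and their $j$-counterparts, and what fixes $\varphi(e_i,y)$ away from its $e_i$-component. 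The resonant components, of weight $\mathrm{wt}(b)+\mathrm{wt}(c)$, are invisible to $ad_x$ and survive as genuine parameters; the coefficient $\alpha_{i+1}^{i,1}$ of $e_{i+1}$ in $\varphi(e_i,e_1)$ is precisely such a resonance, since $\mathrm{wt}(e_{i+1})=-i=\mathrm{wt}(e_i)+\mathrm{wt}(e_1)$.

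Next I would climb the index $i$ using that $\mathfrak{m}_0$ is singly generated over $e_1$, namely $e_{i+1}=[e_i,e_1]$. Feeding the triples $Z(e_i,e_1,x)=0$, $Z(e_i,e_1,y)=0$ and $Z(e_i,e_j,e_1)=0$ into \eqref{eq5} isolates $\varphi(e_{i+1},\cdot)$ (for instance $Z(e_i,e_1,x)=0$ yields $\varphi(e_{i+1},x)=[e_i,\varphi(e_1,x)]-[\varphi(e_i,e_1),x]+[\varphi(e_i,x),e_1]+i\,\varphi(e_i,e_1)$) and so turns the cocycle identity into explicit recurrences in $i$ for the $\beta_k^i,\gamma_k^i$. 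Solving these is what introduces the divided factors $\tfrac{1}{i-k+1}$ and the telescoping differences $\beta_k^{i+1}-\beta_{k-1}^i$ in the displayed expression. The denominators $i-k+1$ are exactly the nonzero weight gaps, so they never vanish in their ranges, while the one index with zero gap ($k=i+1$) is again the resonance carrying the free parameter, so no genuine division by zero occurs. I would then read off the stated form and close by checking that the low-index relations from $Z(x,y,e_1)=0$ and $Z(e_1,x,y)=0$ are automatically consistent, as they must be since $Z$ is a genuine cocycle operator.

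The obstacle I anticipate is organizational rather than conceptual: keeping the triple-indexed bookkeeping straight and, at each weight, cleanly separating the determined off-resonant components from the free resonant ones, all while confirming that the recurrences in $i$ close consistently with the antisymmetry of $\varphi$. In particular the low-index case $i=2$, where $e_{i+1}=e_3$ can collide with other summands of $\varphi(e_i,e_j)$, must be checked by hand; there one verifies that the two contributions to the $e_3$-coefficient cancel (e.g.\ $-\gamma_1^2+\gamma_1^2=0$ for $i=j=2$), so the collision spawns no extra parameter and is already covered by the uniform formula.
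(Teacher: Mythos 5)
Your plan is correct and follows essentially the same route as the paper: expand $\varphi$ on all basis pairs and impose the cocycle identity on triples containing the semisimple elements (the paper uses $Z(e_i,e_j,y)=0$, $Z(e_1,x,y)=0$, $Z(e_i,x,y)=0$ and $Z(e_i,e_1,x)=0$) to express everything through the mixed data, leaving only the resonant coefficients such as $\alpha_{i+1}^{i,1}$ free. Your weight-space framing of $ad_x$ is just a cleaner bookkeeping device for the same computation, and it correctly predicts both the surviving parameters and the nonvanishing of the denominators $i-k+1$.
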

\begin{proof} For $\varphi\in Z^2(\widetilde{{M}_0},\widetilde{{M}_0})$ we set
$$\begin{array}{ll}
\varphi(e_i,e_j)=\sum\limits_{k=1}^{p_{i,j}}\alpha^{i,j}_ke_k+\alpha^{i,j}_{1,1}x+\alpha^{i,j}_{2,2}y,\quad j>i\geq 1,&
\varphi(e_i,x)=\sum\limits_{k=1}^{p_{i}}\beta^{i}_ke_k+\beta^{i}_{1,1}x+\beta^{i}_{2,2}y,\quad i\geq 1,\\[2mm]
\varphi(e_i,y)=\sum\limits_{k=1}^{q_{i}}\gamma^{i}_ke_k+\gamma^{i}_{1,1}x+\gamma^{i}_{2,2}y,\quad i\geq 1,&
\varphi(x,y)=\sum\limits_{k=1}^{t}\tau_ke_k+\tau_{1,1}x+\tau_{2,2}y.\\[2mm]
\end{array}$$

Taking into account the table of multiplication of the algebra $\widetilde{{M}_0}$ in equality (\ref{eq5}) for the element $\varphi$ we derive the following
\begin{center}
    \begin{tabular}{lll}
        2-cocyle identity & &Constraints\\
        \hline\hline
        $Z(e_i,e_j,y)=0,\ i,j\geq2$ &\quad $\Rightarrow $\quad & $\left\{\begin{array}{ll}
                                                                 \alpha_{1,1}^{i,j}=\alpha_{2,2}^{i,j}=0,\ \alpha_{i}^{i,j}=-((i-1)\gamma_{1,1}^i+\gamma_{2,2}^i),\\[1mm]
                                                                 \alpha_{i+1}^{i,j}=-\gamma_{1}^j,\ \alpha_{j}^{i,j}=(j-1)\gamma_{1,1}^i+\gamma_{2,2}^i,\
                                                                 \alpha_{j+1}^{i,j}=\gamma_1^i,\\[1mm]
                                                                 \alpha_{k}^{i,j}=0,\quad \text{where} \ k\neq\{i,i+1,j,j+1\} \ \text{and}\quad 2\le k\le p_{i,j},
                                                                 \end{array}\right.$\\
        $Z(e_1,x,y)=0,$ &\quad $\Rightarrow $\quad & $\left\{\begin{array}{ll}
                                                                 \tau_{1,1}=0,\quad \gamma_{1,1}^1=\gamma_{2,2}^{1}=0,\quad \beta_{2}^1=0,\\[1mm]
                                                                 \sum\limits_{k=3}^{t+1}\tau_{k-1}e_k=-\sum\limits_{k=3}^{p_1}\beta_{k}^1e_k+
                                                                 \sum\limits_{k=3}^{q_1}(k-2)\gamma_{k}^1e_k,\\[1mm]
                                                                 \end{array}\right.$\\
        $Z(e_i,x,y)=0,\  i\geq2,$ &\quad $\Rightarrow $\quad & $\left\{\begin{array}{ll}
                                                                 \tau_{2,2}=0,\ \tau_1=-\gamma_{i+1}^i,\ \beta_{1}^i=(i-2)\gamma_1^i,\\[1mm]
                                                                 \beta_{1,1}^i=(i-1)\gamma_{1,1}^i,\ \beta_{2,2}^i=(i-1)\gamma_{2,2}^i,\\[1mm]
                                                                 \gamma_k^i=0,\ \text{where}\  i\geq 2\ \text{and} \ k\neq\{1,i,i+1\},\\[1mm]
                                                                 \end{array}\right.$\\
        $Z(e_i,e_1,x)=0,\ i\geq2,$ &\quad $\Rightarrow $\quad & $\left\{\begin{array}{ll}
                                                                 \alpha_1^{i,1}=\gamma_1^{i+1}+\gamma_{1,1}^i,\ \alpha_{1,1}^{i,1}=\gamma_{1,1}^{i+1},\
                                                                 \alpha_{2,2}^{i,1}=\gamma_{2,2}^{i+1},\\[1mm]
                                                                 \beta_i^i=\beta_{2}^2+(i-2)\beta_1^1,\quad 3\le i\le p_i,\\[1mm]
                                                                 \alpha_i^{i,1}=\beta_i^{i+1}-(i-1)\beta_{1,1}^1-\beta_{2,2}^1-\beta_{i-1}^i,\
                                                                 \alpha_2^{i,1}=\frac{1}{i-1}\beta_2^{i+1},\\[1mm]
                                                                 \alpha_k^{i,1}=\frac{1}{i-k+1}(\beta_k^{i+1}-\beta_{k-1}^i),\quad k\neq i+1.\\[1mm]
                                                                 \end{array}\right.$\\
    \end{tabular}
\end{center}
Summarizing the obtained above restrictions we complete the proof of theorem.
\end{proof}

\begin{thm}
 $$H^2(\widetilde{{M}_0},\widetilde{{M}_0})=0, \quad H^2(\widetilde{M_2},\widetilde{M_2})\neq0.$$
\end{thm}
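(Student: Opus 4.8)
My plan is to treat the two claims separately, in each case comparing the space of $2$-cocycles with the space of $2$-coboundaries, since by definition $H^2(L,L)={\rm Z}^2(L,L)/{\rm B}^2(L,L)$ and the inclusion ${\rm B}^2(L,L)\subseteq {\rm Z}^2(L,L)$ always holds. Throughout I write $\delta d(u,v):=[d(u),v]+[u,d(v)]-d([u,v])$ for the coboundary of a $1$-cochain $d\in{\rm Hom}(L,L)$, so that ${\rm B}^2(L,L)=\{\delta d\}$ by (\ref{eq4}).

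\textbf{Vanishing for $\widetilde{{M}_0}$.} To prove $H^2(\widetilde{{M}_0},\widetilde{{M}_0})=0$ it suffices to establish the reverse inclusion ${\rm Z}^2(\widetilde{{M}_0},\widetilde{{M}_0})\subseteq {\rm B}^2(\widetilde{{M}_0},\widetilde{{M}_0})$, i.e. that every cocycle $\varphi$ described in Proposition \ref{prop6} is a coboundary. Concretely, for an arbitrary such $\varphi$ I would exhibit a linear map $d\in{\rm Hom}(\widetilde{{M}_0},\widetilde{{M}_0})$ with $\delta d=\varphi$. The map $d$ is built by reading off the free parameters of $\varphi$ in Proposition \ref{prop6} (the quantities $\gamma_1^{i}$, $\gamma_{1,1}^{i}$, $\gamma_{2,2}^{i}$, $\beta_k^{i}$, $\tau_1$, and so on) and declaring $d(e_1)$, $d(e_i)$, $d(x)$, $d(y)$ to be the linear combinations of basis vectors whose coefficients reproduce exactly these parameters. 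I would then verify the identity $\varphi=\delta d$ on each family of pairs $(e_i,e_1)$, $(e_i,e_j)$, $(e_1,x)$, $(e_i,x)$, $(e_1,y)$, $(e_i,y)$, $(x,y)$ separately. The crucial point is that the constraints already recorded in the proof of Proposition \ref{prop6} force the coefficients of $\varphi$ to satisfy precisely the recurrences that $\delta d$ produces, so the matching closes up consistently; in particular the choice of $d$ on each basis vector involves only finitely many nonzero coefficients, so $d$ is a genuine $1$-cochain. This yields $\varphi\in{\rm B}^2$ and hence $H^2(\widetilde{{M}_0},\widetilde{{M}_0})=0$.

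\textbf{Non-vanishing for $\widetilde{M_2}$.} For the second claim I would first obtain, by the same computation as in Proposition \ref{prop6}, an explicit description of ${\rm Z}^2(\widetilde{M_2},\widetilde{M_2})$, and then single out one cocycle $\varphi_0$ and show that it is \emph{not} a coboundary. A natural candidate is a cocycle supported on the pairs $(e_2,e_j)$, reflecting the extra relation $[e_2,e_j]=e_{j+2}$ present in $\widetilde{M_2}$ (i.e. a genuine deformation direction of the bracket, which is exactly what $H^2$ detects). One proves $[\varphi_0]\neq 0$ by assuming $\varphi_0=\delta d$ and deducing that the resulting linear system for the coefficients of $d$ is inconsistent, so no such $d$ exists.

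\textbf{Expected main obstacle.} For $\widetilde{{M}_0}$ the labour is essentially bookkeeping: organizing the infinitely many parameters, confirming that $d$ has finite support on each basis vector, and checking that every component of $\varphi$ — including the tail sums $\sum_{k=i+2}^{p_{i+1}}$ in Proposition \ref{prop6} — is reproduced by $\delta d$. The genuinely delicate step is the non-vanishing for $\widetilde{M_2}$: one must isolate which cocycle survives modulo coboundaries and prove that $\delta d=\varphi_0$ is unsolvable, which amounts to comparing, in the relevant weight component of the natural $\mathbb{Z}$-grading induced by $ad_x$, the size of the cocycle space against that of the coboundary space and showing the former is strictly larger.
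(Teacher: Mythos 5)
Your overall strategy coincides with the paper's: for $\widetilde{{M}_0}$ one shows that every cocycle of Proposition \ref{prop6} equals $\delta d$ for an explicitly constructed $1$-cochain $d$, and for $\widetilde{M_2}$ one exhibits a single cocycle that is not a coboundary. The difficulty is that, as written, your proposal defers essentially all of the content to steps you only announce. For the vanishing statement you never write down the map $d$; the sentence claiming that ``the constraints already recorded in the proof of Proposition \ref{prop6} force the coefficients of $\varphi$ to satisfy precisely the recurrences that $\delta d$ produces'' is exactly the assertion that needs proof, and it is not automatic: the paper's witness $f$ has to be guessed with some care (its values on $e_i$ involve the telescoped sums $\sum_k\frac{1}{k-i}\beta_k^ie_k$ and the accumulated term $\sum_{k=3}^{i}\alpha_k^{k-1,1}$, and its values on $x,y$ acquire components along $x$ and $y$ themselves) and then checked against each family of pairs.

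The gap in the non-vanishing statement is more serious. You name no concrete cocycle and give no computation showing that $\delta d=\varphi_0$ is unsolvable. Moreover, the candidate you do suggest --- a cochain supported only on the pairs $(e_2,e_j)$ --- is not a cocycle: for a $\varphi$ vanishing on all pairs containing $e_1$, the identity $Z(e_1,e_2,e_j)=0$ forces $\varphi(e_3,e_j)=[e_1,\varphi(e_2,e_j)]-\varphi(e_2,e_{j+1})$, so any such cocycle must also carry nonzero components on the pairs $(e_3,e_j)$. The paper's witness is precisely of this coupled form, $\varphi(e_3,e_j)=e_{j+3}$ and $\varphi(e_2,e_j)=(4-j)e_{j+2}$ for $j\ge 5$ and zero otherwise, and its non-triviality is seen from the single evaluation $(\delta f-\varphi)(e_3,e_5)$: since $[e_3,e_5]=0$ in $\widetilde{M_2}$ and the brackets $[f(e_3),e_5]$, $[e_3,f(e_5)]$ only produce multiples of $e_3,e_4,e_5,e_6,e_7$, the term $-e_8$ can never be cancelled. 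Your closing remark about comparing weight components of the $ad_x$-grading points in a reasonable direction, but until you supply the two explicit witnesses (the map $f$ for $\widetilde{{M}_0}$ and the cocycle for $\widetilde{M_2}$) together with the corresponding verifications, what you have is a plausible plan rather than a proof.
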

\begin{proof} Let $\varphi\in Z^2(\widetilde{{M}_0},\widetilde{{M}_0})$ has the form as in Proposition \ref{prop3}. We consider the map $f\in \text{End}(\widetilde{M_0})$ defined as follows
$$\left\{\begin{array}{lll}
f(e_1)=\gamma_2^1e_2+\sum\limits_{k=3}^{p_i}\frac{1}{k-2}\beta_k^1e_k-\beta_{1,1}^1x-\beta_{2,2}^1y,\\[2mm]
f(e_i)=-\gamma_1^ie_1+\sum\limits_{k=2}^{i-1}\frac{1}{k-i}\beta_k^ie_k-\sum\limits_{k=3}^{i}\alpha_{k}^{k-1,1}e_i+\sum\limits_{k=i+1}^{p_i}\frac{1}{k-i}\beta_{k}^{i}e_k-\gamma_{1,1}^ix-\gamma_{2,2}^{i}y,&i\geq 2.\\[2mm]
f(x)=\beta_1^1e_1+\beta_{1}^{1}x+(\beta_2^2-\beta_1^1)y,\\[2mm]
f(y)=\tau_1e_1-\sum\limits_{k=2}^{q_1-1}\gamma_{k+1}^1e_k-\sum\limits_{k=2}^{p_1-1}\frac{1}{k-1}\beta_{k+1}^1e_k+
\gamma_1^1x+(\gamma_2^2-\gamma_1^1)y.\\[2mm]
\end{array}\right.$$

Then $(\varphi-d^1f)(a,b)=0$ for any $a,b \in \widetilde{{M}_0}.$ Thus, $H^2(\widetilde{{M}_0},\widetilde{{M}_0})=0.$

Consider the map $\varphi\in \Hom(\widetilde{M_2}\wedge \widetilde{M_2}, \widetilde{M_2})$ defined as follows
$$\left\{\begin{array}{ll}
\varphi(e_3,e_j)=e_{j+3},\\[1mm]
\varphi(e_2,e_j)=(4-j)e_{j+2},& j\geq5,\\[1mm]
\varphi(a,b)=0, & \mbox{for the rest} \ a,b \in \widetilde{M_2}.\\[1mm]
\end{array}\right.$$

It is easy to check that  $\varphi \in Z^2(\widetilde{M_2},\widetilde{M_2})$.

Let us assume that there exists $f\in End(\widetilde{M_2})$ such that $(d^1f-\varphi)(a,b)=0$ for any $a,b \in \widetilde{M_2}.$

Setting $f(e_k)=\sum\limits_{i=1}^{t_k}\alpha_i^ke_i+A^kx$, we obtain
$$(df^1-\varphi)(e_3,e_5)=[f(e_3),e_5]+[e_3,f(e_5)]-f([e_3,e_5])-e_{8}=$$
$$=\alpha_1^3e_{6}+\alpha_{2}^3e_{7}-5A^3e_5-\alpha_1^5e_{4}-\alpha_2^5e_5+3A^5e_3-e_{8}\neq 0,$$
which is a contradiction to the assumption $d^1f=\varphi$. Therefore, $\varphi \not \in B^2(\widetilde{M_2},\widetilde{M_2})$.
\end{proof}

\end{document}